\newtheorem{theorem}{Theorem}[section]
\newtheorem{lemma}[theorem]{Lemma}
\newtheorem{defn}[theorem]{Definition}
\newtheorem{rem}[theorem]{Remark}
\newenvironment{proof}[1][Proof]{\textbf{#1.} }
{\hfill\rule{0.5em}{0.5em}\medskip}
\newenvironment{proof*}[1][Proof]{\textbf{#1.} }{}
\def\Kerekjarto{Ker\'ekj\'art\'o } %Introduced as it is hard to
\def\epsilon{\varepsilon}
\begin{document}

%\title{Foliations on non-metric manifolds: \\
%second part of the project: strong anomalies}

\title{
%Dynamique de l'\'el\'ectron sur la surface
%$\omega$-bounded avec $\chi\neq 0$
%Dynamics of the electron on
Hairiness of $\omega$-bounded surfaces\\ with non-zero Euler
characteristic
\\}

\author{Alexandre Gabard}
\maketitle

\newbox\abstract
\setbox\abstract\vtop{\hsize 8.5cm \noindent

\footnotesize \noindent\textsc{Abstract.} A little complement
concerning the dynamics of non-metric manifolds is provided,
by showing that any flow on an \hbox{$\omega$-bounded} surface
with non-zero Euler character has a fixed point.}

\centerline{\hbox{\copy\abstract}}

\iffalse
\bigskip

2000 {\it Mathematics Subject Classification.} {\rm 57N99,
57R30, 37E35.}

{\it Key words.} {\rm Non-metric manifolds, Long pipes, Flows.
}
\bigskip\fi

%\normalsize

\section{Introduction}\label{sec1}

In a previous paper \cite{GabGa_2011} we
%attempted a study of
adventured timidly into the dynamics of flows on non-metric
manifolds.
%However,
%%%Obviously,
%certain limitations and
%%numerous
Several missing links
%plagued
foiled the
%generality
sharpness of the conclusions, leaving great zones of swampy
%obscurity.
darkness. \iffalse Probably many of these dark regions are not
intrinsically hard, not so hard to fill.\fi The
%%following
present
%paper
note is supposed to
%complement some missing links and to
%sharpen
generalise the $2$-dimensional {\it
$\omega$-bounded}\footnote{
%Recall that
A (topological) space is said to be {\it $\omega$-bounded} if
the closure of any countable subset is compact. In the case of
manifolds it is equivalent to ask that
%the closure of
any Lindel\"of subset has a compact closure. [For
Lindel\"of$+$locally second countable $\Rightarrow$ second
countable $\Rightarrow$ separable.]} hairy ball theorem, which
in \cite[Thm\,4.5]{GabGa_2011} was confined
%over-particularized
to the simply-connected case. By a {\it hairy ball} one
understands
%more-or-less
commonly
%understand
a topological space such that any
%continuous
{\it flow} (i.e., continuous
%real line
${\Bbb R}$-action) has a fixed point. In
\cite[Section~4]{GabGa_2011}  we
%expressed the guess
speculated that the hairy ball theorem for $\omega$-bounded
surfaces with Euler character $\chi\neq 0$ might be plagued by
the
%plague
existence of {\it wild pipes}. The latter jargon
%(of
(to which we shall not attempt to
%need
%%give
assign a precise
%mathematical
meaning)
%---albeit probably
%definable in proper
refers
%rather
to the phenomenology discovered
%and well described
in Nyikos~\cite[\S 6, p.\,669--670]{Nyikos84}
%to
%the effect
effecting that weird continua can be
%used
transfinitely amalgamated to construct (``wild'') long pipes
laking a {\it canonical}\footnote{This means primarily that
$M_\lambda=\bigcup_{\alpha < \lambda} M_\alpha$ for any limit
ordinal $\lambda$; we shall not use this, but the reader may
%want
wish to compare Nyikos~\cite[Def.\,4.3, p.\,656]{Nyikos84}.}
$\omega_1$-exhaustion, whose levels closures are compact
%bordered
%surfaces.
annuli. In contradistinction,
Theorem~\ref{omega_bded_hairy_ball_gnal:thm} below
%shows
indicates rather that the ``tameness'' of $2$-dimensional
topology (Schoenflies) conjointly with the
Poincar\'e-Bendixson theory (which applies in the {\it
dichotomic} pipes, where Jordan separation holds true)
%%\footnote{More-or-less what Schoenflies called
%%``Unbewalltheit''.}
%%%is
%
seem to
%be
unite into a stronger force
%superseding
supplanting the ``wildness'' of pipes, at least
%when focusing
%if one confines his focus on the
%regarding the issue of
%on
%regarding
as far as the hairy ball paradigm is concerned.
%In other
%words,
Thus, Conjecture~4.14 in \cite{GabGa_2011} (saying that
$\chi(M)\neq 0$ is a sufficient condition in the
$\omega$-bounded context for the existence of an
%stationary
%%stagnation
%%rest
equilibrium point) holds
%%corroborated
true for $n=\dim M=2$,
%whilst
whereas
%of course
the Poincar\'e-Bendixson method used below
does not adapt to dimensions $n\ge 3$,
%where
leaving the conjecture
%remains
%unanswered.
wide unsettled.
%(as far as the author knows).
%
%
%Let us emphasize that
%%As far as we understand it, the proof below (for $n=2$)
%becomes extremely easy,
%once
%the
%with
In fact, a third more hidden
%, yet decisive,
force
%seems key
%seems to
%plays
%an indispensable
%a decisive r\^ole in
%adumbrates
decides for the vacillation toward tameness, namely {\it
Whitney's flows}, i.e. the creation of a motion compatible
with a given oriented one-dimensional foliation. A noteworthy
feature of this
%key
result of Whitney is that---albeit
%%---even though
 specifically metrical
(as amply discussed in \cite{GabGa_2011})---it
%provides
proves oft useful in non-metric
%consequences
investigations (cf.
%eventually in
optionally \cite[Sec.\,2.2]{GabGa_2011}, where
%the
its relevance to the classification of foliations on the long
plane ${\Bbb L}^2$
%was emphasised).
is recalled).

\section{Homological finiteness of $\omega$-bounded\\
low-dimensional manifolds (after Nyikos)}

%Note that an argument of Nyikos establish
To state properly Theorem~\ref{omega_bded_hairy_ball_gnal:thm}
below, we need an {\it a priori}
%knowledge {\it a priori} of the finite character
finiteness  of the (sing\-ular) homology of
%such a
$\omega$-bounded surfaces, especially of its Euler
characteristic. This section provides an elementary argument,
yet it
%must
can be noticed that
%granting
the bagpipe theorem of Nyikos (which we shall anyway use
later) also implies the desired finiteness (via
Lemma~\ref{charact_bagpipe_equal_char_bag}). Thus the
economical reader may prefer to skip completely this section,
and move forward to Section~\ref{hairy_ball_sec:omega-bded}.

%In fact
The following
%proof
argument of Nyikos  gives simultaneously the $3$-dimensional
case, for it
%merely
depends on the issue that metric manifolds in those low
dimensions ($\le 3$) admit PL-structures (piecewise linear).
(Below singular homology is understood, and coefficients
%might
may be chosen in the fields ${\Bbb Q}$ or ${\Bbb F}_2={\Bbb
Z}/2{\Bbb Z}$.)

\begin{lemma}
{\rm (i)} An $\omega$-bounded $n$-manifold, $M$,
%with
of dimension $n\le 3$ has
%a
finite-dimensional homology in each degree.  {\rm (ii)}
Besides, the groups $H_i(M)$
% $b_i$
vanish for $i> n$ (and also for $i=n$ if $M$ is
%non-compact
%non-closed).
open). Thus,
%in particular
 $M$ has a well-defined Euler
%(Listing-Dyck)-Poincar\'e
characteristic, $\chi(M)=\sum_{i=0}^{\infty} (-1)^i b_i(M)$,
where the Betti numbers $b_i(M)$ are the dimensions of the
$H_{i}(M)$.
%, coefficients in ${\Bbb Q}$.
%
%(ii)
\end{lemma}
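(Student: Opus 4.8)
The plan is to exploit the key structural feature of $\omega$-bounded manifolds in low dimensions, namely that every Lindel\"of (equivalently, $\sigma$-compact) subset has compact closure, and to combine this with the existence of PL-structures in dimensions $\le 3$. First I would fix coefficients in a field (either ${\Bbb Q}$ or ${\Bbb F}_2$), so that homology consists of vector spaces and ``finite-dimensional'' literally means finite rank. The central idea is that singular homology is \emph{finitely supported}: any singular cycle is carried by the image of finitely many singular simplices, hence lives inside some second-countable (indeed compact) piece of $M$. So I would aim to show that finitely many compact PL-pieces already capture all the homology.

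\medskip
\noindent\textbf{Main argument for (i).} I would first reduce to a single exhausting compact PL-submanifold-with-boundary. Since $M$ is a metric-free $\omega$-bounded manifold, pick any point and build an increasing sequence: given a compact set $K$, its closure is compact, and by $\omega$-boundedness the union of a countable chain of compacta again has compact closure. The crucial claim is that because $M$ is $\omega$-bounded (not merely metric), one cannot keep adding genuinely new homology indefinitely. Concretely, I would argue that there is a single compact PL-submanifold-with-boundary $C \subseteq M$ such that the inclusion $C \hookrightarrow M$ is surjective on homology in every degree: for if not, one could choose a strictly increasing sequence $C_0 \subsetneq C_1 \subsetneq \cdots$ of compacta each contributing a new homology class not killed later, whose union $U = \bigcup_n C_n$ is Lindel\"of, hence has compact closure $\bar U$; but $\bar U$ is a compact (PL, in the relevant dimensions) subset carrying infinitely much homology, contradicting the finite-dimensionality of the homology of a \emph{compact} manifold (or of a compact polyhedron). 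Since every homology class of $M$ is supported on a compact piece and every compact piece factors through such a $C$, the map $H_i(C) \to H_i(M)$ is onto, and as $H_i(C)$ is finite-dimensional (compact polyhedron), so is $H_i(M)$.

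\medskip
\noindent\textbf{Argument for (ii).} For the vanishing $H_i(M)=0$ when $i>n$, I would again use that any class is carried by a compact, hence second-countable, hence metrizable piece of the $n$-manifold $M$; a metrizable $n$-manifold is an $n$-dimensional (PL, in these dimensions) space, so its homology vanishes above degree $n$, and the supporting inclusion pushes the vanishing to $M$. For the top-degree statement $H_n(M)=0$ when $M$ is open, the point is that an \emph{open} (i.e. non-compact) $n$-manifold has no compact piece realising a top class nontrivially: any $n$-cycle in $M$ is supported in a compact metrizable submanifold $C$, but the fundamental-class obstruction in $H_n$ dies once one passes to a strictly larger open piece of the open manifold (equivalently, $H_n$ of a connected non-compact $n$-manifold vanishes, and support-finiteness propagates this). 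With (i) and (ii) in hand, the Euler characteristic $\chi(M)=\sum_{i=0}^{\infty}(-1)^i b_i(M)$ is a finite alternating sum and hence well defined.

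\medskip
\noindent The step I expect to be the genuine obstacle is establishing the single \emph{surjecting} compact piece $C$, i.e. ruling out an infinite strictly increasing tower of homology contributions. The delicate point is that $\omega$-boundedness controls \emph{countable} unions but a diverging tower of compacta is exactly a countable union, so I must ensure the closure $\bar U$ is not only compact but genuinely a compact manifold(-with-boundary) or polyhedron to which the finiteness of compact homology applies; making this rigorous without circularity (avoiding premature appeal to the bagpipe theorem, which the section explicitly wishes to precede) will require a careful PL-collaring or triangulation argument and a clean statement that a compact metrizable $n$-manifold has finite-dimensional homology vanishing above degree $n$.
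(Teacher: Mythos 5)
Your overall strategy (finite support of singular cycles, Lindel\"of unions having compact closure, finiteness of the homology of compact polyhedra) is the same as the paper's, but the step you yourself flag as ``the genuine obstacle'' is in fact a real gap, and as written your argument fails there. The closure $\bar U$ of a $\sigma$-compact set in an $\omega$-bounded manifold is compact, but it is \emph{not} in general a manifold-with-boundary or a polyhedron --- it can be an arbitrary compactum, and compact subsets of a surface can perfectly well have infinite-dimensional singular homology (the Hawaiian earring embeds in ${\Bbb R}^2$ and has $H_1$ of uncountable rank). So the sentence ``$\bar U$ is a compact (PL, in the relevant dimensions) subset \dots contradicting the finite-dimensionality of the homology of a compact manifold'' is not justified, and the contradiction does not follow. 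The missing idea, which is exactly how Nyikos (and the paper) closes this hole, is \emph{engulfing}: cover the compact closure by finitely many charts to obtain a metric (second countable) open neighbourhood, triangulate it (Rad\'o for $n=2$, Moise for $n=3$ --- this is where $n\le 3$ enters), and use regular neighbourhood theory to enclose the compactum in a compact \emph{bordered PL-submanifold} $W$. It is $H_i(W)$, not $H_i(\bar U)$, that is finite-dimensional, and the independent classes inject into it. Iterating this engulfing transfinitely, the paper builds an $\omega_1$-exhaustion $M=\bigcup_{\alpha<\omega_1}M_\alpha$ in which every successor stage has compact bordered closure; any Lindel\"of set then sits inside some $M_\alpha$ (uncountable cofinality of $\omega_1$), hence inside the compact bordered manifold $\overline{M_{\alpha+1}}$.

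Two smaller remarks. First, your detour through a single compact piece $C$ with $H_i(C)\to H_i(M)$ surjective is unnecessary and harder than what is needed: to contradict infinite-dimensionality it suffices to take countably many linearly independent classes, note that the union of the supports of representing cycles is Lindel\"of, and engulf it once in a single compact bordered manifold where the classes remain independent (independence in $M$ pulls back, since a dependence downstairs would push forward). Second, for (ii) the paper simply cites the standard vanishing results (Samelson's Lemma~B); your sketch is in the right spirit, but the same engulfing step is needed to place a given cycle inside a compact bordered $n$-manifold (with non-empty boundary when $M$ is open) before invoking the vanishing of its homology in degrees $\ge n$.
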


\begin{proof} (We recall briefly the argument
of Nyikos \cite[Cor.\,5.11, p.\,665]{Nyikos84}.) The
assumption of low-dimensionality ($n\le 3$) ensures that
metric $n$-manifolds are ``triangulable'', or
%rather
better have PL-structures (Rad\'o \cite{Rado_1925},
%Moise~\cite{Moise}
Moise for $n=3$)\footnote{Of course nothing similar holds in
dimension $4$ (Rohlin 1952
\cite{Guillou-Marin-Rohlin-Casson_1952}, Freedman 1982).}.
%%Then
Regular neighbourhood theory can be employed to engulf any
%%%countable
compact subset in a  compact bordered (polyhedral)
$n$-submanifold. ({\it Bordered manifold} means
manifold-with-boundary.) Thus starting with any chart $M_0$,
take its closure $\overline{M_0}$ which is compact (by
$\omega$-boundedness); cover it by charts to get a metric
enlargement $N_0$ (neighbourhood), in which one engulfs the
compactum $\overline{M_0}$
%by
into a bordered compact manifold $W_0$, and let $M_1:={\rm
int} W_0$ be its interior. By transfinite induction of this
clever routine (letting $M_{\lambda}:=\bigcup_{\alpha<\lambda}
M_{\alpha}$ for any limit ordinal), one
%%%Nyikos
constructs an $\omega_1$-exhaustion of $M$ by
%compact
%bordered
open metric manifolds $M=\bigcup_{\alpha < \omega_1}
M_\alpha$, where for each {\it non-limit} ordinal $\alpha$ the
closure $\overline{M_{\alpha}}=W_{\alpha-1}$ is a compact
bordered $n$-manifold. (Unfortunately we cannot claim this at
limit ordinals!)

%Now, given a cycle in $M$, cover its compact support by
%Lindel\"of union of charts $L\subset M$
(i) If $M$ has
%a non-finite
infinite-dimensional homology, there is
%an
a countably infinite sequence $\gamma_n$ of homology classes
linearly independent in $H_i(M)$. The union of the supports of
representing cycles $c_n\in \gamma_n$ is Lindel\"of, thus
contained in some $M_\alpha$, $\alpha<\omega_1$. Therefore the
$c_n$'s define homology classes in
$H_i(\overline{M_{\alpha+1}})$ which are still linearly
independent, violating the finite dimensionality of
$H_i(\overline{M_{\alpha+1}})$.

(ii) This is a standard fact for which we may refer to
Samelson~\cite[Lemma~B]{Samelson_1965-homology} (compare also
Milnor-Stasheff~\cite[p.\,270--275]{Milnor-Stasheff_1957-1974}).
\end{proof}

{\small
\begin{rem} (Very optional reading.) {\rm We are not aware of a
corruption of the lemma in case $\dim M \ge 4$. Perhaps
 Nyikos' argument can be given more
%swing
ampleness if
%we substitute
instead of engulfing by polyhedrons
%by
one tries to engulf
%by
compacta by topological compact bordered manifolds (perhaps
somewhat akin to M.\,H.\,A. Newman 1966, The engulfing theorem
%in the
for topological manifolds, {\it Ann. of Math. (2) 84}).
%The
%latter
Such manifolds or more generally compact metric ANR's are
dominated by finite polyhedra, thus ``finitary''
homologically. As we shall not use this
%directly,
presently, we prefer to skip this delicate question.}
\end{rem}

}

\section{Non simply-connected $\omega$-bounded hairy
ball}\label{hairy_ball_sec:omega-bded}

\begin{theorem}~\label{omega_bded_hairy_ball_gnal:thm}
%An $\omega$-bounded surface with non-zero Euler characteristic
%$\chi(M)\neq 0$ has the fixed point property for flows.
Any flow on an $\omega$-bounded surface with non-zero Euler
characteristic $\chi(M)\neq 0$
%the fixed point property
%for flows.
has a stationary point.
\end{theorem}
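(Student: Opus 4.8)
The plan is to reduce the statement to the classical metric Poincaré–Bendixson theory by invoking the structural theory of $\omega$-bounded surfaces. The key structural input is the bagpipe theorem of Nyikos (alluded to in the excerpt): an $\omega$-bounded surface $M$ decomposes as a compact ``bag'' (a compact surface-with-boundary) with finitely many ``pipes'' attached along the boundary circles, each pipe being a long half-open cylinder. By the homological finiteness already established, $\chi(M)$ is well-defined and finite; moreover the Euler characteristic is carried entirely by the bag, since each pipe is homotopically an open annulus and contributes $0$. Thus $\chi(M)=\chi(\text{bag})\neq 0$ forces the bag to be a genuinely nontrivial compact surface.

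First I would fix a flow $\phi$ on $M$ and suppose, for contradiction, that it is fixed-point free. The strategy is to show that under this hypothesis the dynamics must ``escape to infinity'' along the pipes, and then derive a contradiction from the fact that the compact bag, carrying the nonzero Euler characteristic, cannot support a nonsingular flow on its own. Concretely, I would examine the behaviour of orbits inside each pipe. Because a dichotomic (tame) pipe satisfies Jordan separation, the Poincaré–Bendixson machinery applies there: the $\omega$-limit set of any orbit trapped in the metric part of a pipe would have to be a fixed point, a periodic orbit, or a graph of such, and a careful analysis of how orbits can cross the successive annular levels of the pipe should show that a fixed-point-free flow forces orbits to march monotonically outward (or inward) through the pipe's $\omega_1$-exhaustion by annuli.

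The crux is then a trapping/indexing argument on the compact bag. I would argue that by pushing the flow's behaviour back from the pipes, one obtains on the compact bordered surface an essentially nonsingular flow whose boundary behaviour is controlled (orbits entering or leaving through the pipe-mouths), and then apply the Poincaré–Hopf index theorem: a nonsingular vector field (or the Whitney flow generating the given one-dimensional foliation, invoking Whitney's theorem to pass between flows and foliations) on a compact surface with boundary, with prescribed inward/outward boundary behaviour, has total index equal to $\chi$ of the surface, which is nonzero — the desired contradiction.

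The main obstacle I anticipate is controlling the flow at the non-metrizable ``ends'' of the pipes and at the limit ordinals of the exhaustion, where the closures of the exhaustion levels need not be compact bordered manifolds. Specifically, the delicate step is establishing that a fixed-point-free flow genuinely drives every pipe-orbit coherently toward the long end (so that no recurrence sneaks back into the bag in an uncontrolled way), so that the index computation on the compact core is legitimate. This is precisely where the tameness of $2$-dimensional topology (Schoenflies) and the applicability of Poincaré–Bendixson inside dichotomic pipes must be combined, and I expect the bulk of the technical work to reside there.
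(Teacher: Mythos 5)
Your structural inputs are the right ones (bagpipe decomposition, $\chi(M)=\chi(B)$ for the bag $B$, dichotomy of the pipes, Whitney's theorem to pass between foliations and flows), but the dynamical picture at the heart of your argument is wrong, and this is a genuine gap rather than a technicality. You propose to show that a fixed-point-free flow drives every pipe-orbit ``monotonically outward'' toward the long end, and then to run a Poincar\'e--Hopf index count on the compact bag with prescribed inward/outward behaviour at the pipe-mouths. In an $\omega$-bounded manifold no orbit can escape to infinity: an orbit ${\Bbb R}x$ is Lindel\"of, hence has compact closure, and is therefore trapped in some level $M_\alpha$ of any $\omega_1$-exhaustion. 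So the ``delicate step'' you flag --- establishing coherent outward drift --- is not merely delicate, it is false, and the index computation it was meant to legitimise never gets off the ground. (Moreover, even granting some control near the pipe-mouths, Poincar\'e--Hopf with boundary requires transversality along the boundary circles, which you have no means of arranging.)

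The paper turns the compactness of orbit closures to its advantage instead of fighting it. Assuming no fixed point, it saturates the bag under the flow (so $\overline{{\Bbb R}B}$ is compact by $\omega$-boundedness), picks a point $x_i$ in each pipe outside this saturation, and notes that $\overline{{\Bbb R}x_i}$ is compact and lies in the dichotomic part $M-B$. Applying Whitney to obtain an honest flow on a metric (Lindel\"of) neighbourhood and then Poincar\'e--Bendixson, each such orbit closure contains a periodic orbit $K_i$, a fixed point being excluded by hypothesis. By Schoenflies, cutting along the $K_i$ produces a new bagpipe decomposition whose bag $B^{\star}$ is compact, \emph{flow-invariant} (its boundary consists of orbits), and has $\chi(B^{\star})=\chi(M)\neq 0$; Lefschetz applied to the dyadic time maps then yields a fixed point in $B^{\star}$, a contradiction. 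The periodic orbits, which your escape picture implicitly rules out, are precisely the device that converts the pipe analysis into a compact invariant set on which classical fixed-point theory applies.
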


\begin{proof} Let $f$ be a flow on such a surface $M$.
By Nyikos \cite[Thm 5.14, p.\,666]{Nyikos84} the surface
admits a bagpipe decomposition $M=B\cup \bigsqcup_{i=1}^n
P_i$, where the {\it bag} $B$ is a compact connected
bordered\footnote{By a {\it bordered manifold} we mean a
manifold-with-boundary.} surface with $n$ {\it contours}
(=circular boundary components) and the $P_i$ are long pipes.
%Note that
In slight
%contrast to
departure from Nyikos
%%({\it loc.~cit.}),
\cite[Def.\,5.2, p.\,662]{Nyikos84}
%but in accordance with
%Gauld \cite{Gauld_2009_homeo-of-bag-pipes},
our pipes are supposed to
%be bordered surfaces
have a boundary circle
%so that a sewing procedure allows one
%to reconstruct $M$ if surgering it along $\partial B$ the
%boundary of the bag.
which seems in better accordance with the combinatorial
``cut-and-paste'' philosophy. It is easy to check that
$\chi(B)=\chi(M)$, and that this equality holds for any
bagpipe decomposition of $M$. (For this numerology cf.
Lemma~\ref{charact_bagpipe_equal_char_bag} below, and for the
(modified) definition of a long pipe compare eventually the
discussion in Section~\ref{pipes} below. Of course
%it is
%evident that
Nyikos bagpipe theorem is
%unaffected
not jeopardized by this minor change of viewpoint.)

Let
%flow
%us
propagate the bag $B$ under the dynamics $f\colon {\Bbb R}
\times M \to M$, to obtain ${\Bbb R} B := f({\Bbb R}\times B)$
which is Lindel\"of.
%Thus
By $\omega$-boundedness the closure $\overline{{\Bbb R} B}$ is
compact, and flow-invariant (yet,
%unfortunately not
unlikely to be a respectable bag; a priori only a weird
compactum stemming from a complicated diffusion process). At
any rate, the residual surface $S:=M-\overline{{\Bbb R} B}$ is
%thus
invariant and contained in $M-B$. Clearly, the set $M-B$
(consisting of the residual open pipes) is {\it dichotomic},
i.e., divided by any Jordan curve
%(apply \cite[Lemmas 5.4 and 5.3]{GabGa_2011}),
%(compare
(cf. Lemma~\ref{dichotomy_of_a_pipe_interior} below), hence by
heredity $S$ is
%also
likewise dichotomic (compare \cite[Lemma 5.3]{GabGa_2011}).
This will allow us to apply the Poincar\'e-Bendixson theory in
each pipe to surger out a new flow-invariant bag.

\begin{figure}[h]
%\centering
    \epsfig{figure=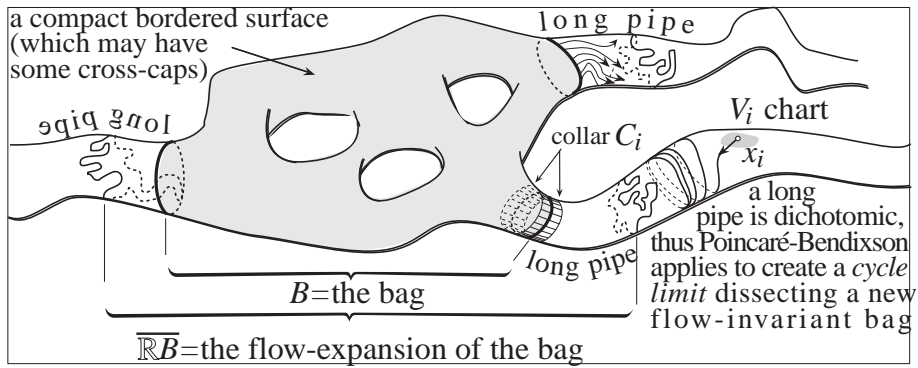,width=122mm}
  \caption{\label{Artist_views}
  %Artist view of
  A Nyikos' bagpipe
  with a Poincar\'e-Bendixson argument
  %along
  in the pipes}
\vskip-5pt\penalty0
\end{figure}

For each $i=1, \dots, n$, choose a ``remote'' point $x_i \in
P_i \cap S$ on the pipe $P_i$ and a chart $V_i\subset S$ about
it. (Such points $x_i$ exist, because the
%Lindel\"of
compact set $\overline{{\Bbb R}B}$ cannot cover completely any
of the non-metric pipes $P_i$.) By $\omega$-boundedness the
orbit-closure
%$K_i:=
$\overline{{\Bbb R} x_i}^{M}$ (in $M$) is compact, yet a
priori not contained in ${\Bbb R}V_i$. To arrange the
situation we need a little trick, based mostly  on
Whitney~1933~\cite{Whitney33}.
%Of
%course
We may assume that $f$ has no stationnary point, otherwise we
are finished. Take $C_i$ a little open collar of the circle
$\partial P_i$ such that $\overline{C_i}\approx{\Bbb
S}^1\times[0,1]$ and pairwise disjoint
($\overline{C_i}\cap\overline{C_j}=\varnothing$ if $i\neq j$).
Aggregate $C_i$ to ${\Bbb R} V_i$. This set ${\Bbb R} V_i \cup
C_i=:W_i$ is not flow-invariant a priori, but we may look at
the foliation $\cal F$ on $M$ induced by the {\it
non-singular} flow $f$ (Whitney$+$Hausdorff, as discussed in
\cite[Thm 2.2]{GabGa_2011}) and restrict ${\cal F}$ to the
open set
%%${\Bbb R} V_i \cup C_i$.
$W_i$, which is Lindel\"of, hence metric (Urysohn).
%%%Then
%appealing again to
The theorem of (Ker\'ekj\'art\'o-)Whitney \cite[Thm
2.5]{GabGa_2011}\footnote{Compare
Ker\'ekj\'art\'o~1925~\cite{Kerekjarto_1925}, and
Whitney~1933~\cite{Whitney33}
%for the
as the
%first-hand
original sources.} creates a
%fluid
flow-motion
%$f_{\star}$
$f_i$ compatible with the restricted foliation
%${\cal F}_{\star}=
${\cal F}\vert W_i$.

\iffalse To arrange the situation,  we consider $B_0\subset B$
a sub-bag of $B$ (constructed by taking an interior collar of
$B$) and let $C={\rm int} B-B_0$ be the corresponding open
collar. The sets $W_i={\Bbb R}V_i\cup C$ are metric, so that
the flow $f_i$ restriction of $f$ to ${\Bbb R}V_i$ can after a
suitable time-change be extended to ${\Bbb R}V_i\cup C$
without altering the original phase-portrait. (This is Beck's
technique, e.g. as discussed in \cite[Lemma
5.19]{GabGa_2011}.) Now, $\overline{{\Bbb R}x_i}^M$ is
contained in ${\Bbb R}V_i\cup C$ which is dichotomic. By
Poincar\'e-Bendixson (cf. \cite[Lemma~4.2]{GabGa_2011}),
$\overline{{\Bbb R}x_i}^M$ contains either a fixed point or a
periodic orbit. \fi

By Poincar\'e-Bendixson applied to $W_i$
%\curvearrowleft
acted upon by the flow
%%%$f_\star$
$f_i$ (cf. e.g., \cite[Lemmas~4.2~and~4.4]{GabGa_2011}),
$\overline{{\Bbb R}x_i}^M$ contains either a fixed point or a
periodic orbit, say
%$\gamma_i$.
$K_i\approx {\Bbb S}^1$. The first option
%which actually
cannot occur by construction.
%(and at any rate would be the
%conclusion).
%and
%%%%%%%%%%%%%%%%%
\iffalse If
%$\gamma_i$
the circle $K_i$ is null-homotopic (inessential) then it
bounds a disc by Schoenflies \cite[Prop.\,2]{GaGa2010}, in
which a fixed point is created by Brouwer. Therefore
%we may
%assume that
%$\gamma_i$
$K_i$ is essential.\fi
%%%%%%%%%%%%%%%%%%%%%%%%%
\iffalse and being embedded it is actually homotopic to the
contour $\partial P_i$ of the pipe. Filling the pipe by a disc
and using Schoenflies, $K_i$
%therefore
is thus a {\it core curve} of the pipe $P_i\cup
\overline{C_i}=:P_i^{\star}$
%(eventually
(new pipe slightly enlarged by the collar). In other words,
%that
cutting $P_i^{\star}$ along $K_i$ yields as a metric piece an
annulus ${\Bbb S}^1\times[0,1]$. \fi
%%%%%%%%%%%%%%%%%%%%%
The circle $K_i$ is certainly contained in the pipe $P_i$, yet
may touch its boundary, so we regard it in the slightly
enlarged pipe $P_i\cup \overline{C_i}=:P_i^{\star}$. Let
$B_{\star}=B-\bigcup_{i=1}^n {\rm int} P_i^{\star}$ be the
corresponding smaller ``retracted'' bag. Filling the pipe
$P_i^{\star}$ by a disc $D_i^{\star}$ yields a
simply-connected surface $F_i:=P_i^{\star}\cup D_i^{\star}$
(cf. Definition~\ref{pipe:def}).
%thus
By Schoenflies \cite[Prop.\,2]{GaGa2010} $K_i$ bounds a disc
$D_i\subset F_i$ containing $D_i^{\star}$ in its interior.
Thus $D_i-{\rm int}D_i^{\star}$ is an annulus $A_i\approx{\Bbb
S}^1\times[0,1]$ (again by Schoenflies, or more precisely by
its corollary known as the $2$-dimensional annulus theorem,
e.g., Moise~\cite[p.\,91]{Moise_1977}). Therefore the
non-metric component of $P_i^{\star}$ cut along
 $K_i$, that is $\Pi_i:=F_i-{\rm int}D_i$, is
 again a long pipe (fill it by the 2-disc
 $D_i$).
 %%%=A_i\cup D_i^{\star}$).
 %%%\approx {\Bbb B}^2$ a 2-disc).
%%%%%%%%%%%%%%%%%%%%%%%%%%%%%
Thus, surgering
%Cutting
$M$ along the (disjoint) union
%of the
%$\gamma_i$
$\bigsqcup_{i=1}^n K_i$ yields a new
%{\it invariant}
bagpipe decomposition, whose bag is $B^{\star}:= B_{\star}\cup
\bigsqcup_{i=1}^n A_i$ (annular expansion of the retracted
bag) and with pipes $\Pi_i$.
The new bag $B^{\star}$ is flow-invariant (under the original
flow $f$), thus a fixed point is
%generated
created by Lefschetz\footnote{Or
%essentially
Poincar\'e to be historically sharper, yet confined to the
vector fields case.
%(differential equations).
}, since $\chi(B^{\star})=\chi(M)$ (by
Lemma~\ref{charact_bagpipe_equal_char_bag} below), which is
non-zero by assumption. (As usual one applies Lefschetz to the
dyadic times $t_n=1/2^n$ of the flow, to get a nested sequence
of non-void fixed-point sets $K_n={\rm Fix} (f_{t_n})$ (where
$f_t(x)=f(t,x)$), whose infinite intersection
$\bigcap_{n=0}^{\infty} K_n$ is non-empty by compactness of
the bag $B^{\star}$, and a point in this intersection is fixed
under all dyadic times, hence under all real times.)
\end{proof}

\section{Nyikos's bagpipe decompositions}\label{pipes}

To put the bagpipe philosophy of Nyikos in closer connection
to the classical combinatorial topology, it
%is
seems convenient to alter slightly the original definition of
a pipe (given in Nyikos~\cite[Def.\,5.2, p.\,662]{Nyikos84}).
%(as was
%suggested in the paper of Nyikos and Gauld)
First, amending a boundary to the pipe gives some
%physico-chemical
material substrate for a sewing procedure
%of
%%the pipes along
%them
%onto
along the bag boundaries,
%,
%and eventually also
and second we may wish to express the ``pipe'' condition
intrinsically without
%reference to any
the artifact of
%any
an
%specific
exhaustion (as already
%was already
%suggested
implicit in Nyikos~\cite[p.\,668, \S\,6 and
p.\,644]{Nyikos84}).

\begin{defn}\label{pipe:def} {\rm A {\it long pipe} is a non-metric $\omega$-bounded
$2$-manifold $P$ with one boundary component $\partial
P\approx {\Bbb S}^1$ homeomorphic to the circle,
%such that if one sews
which capped-off by a $2$-disc $D$
%to the boundary the
yields a {\it simply-connected}  $P\cup D=:P_{\rm filled}$
(called the {\it filled pipe}).}
\end{defn}

\begin{lemma}\label{dichotomy_of_a_pipe_interior}
The interior of any long pipe is
%a
dichotomic,
%surface,
i.e., divided by any embedded circle (alias Jordan curve).
\end{lemma}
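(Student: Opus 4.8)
The plan is to reduce the statement to the planar Schoenflies theorem by passing to the filled pipe. Writing $P_{\rm filled}=P\cup D$ (Definition~\ref{pipe:def}), where $D$ is the capping $2$-disc glued along $\partial P$, the interior of the pipe is precisely the complement of the closed cap, ${\rm int}(P)=P_{\rm filled}\setminus D$. First I would record the elementary properties of $P_{\rm filled}$ that drive the argument: it is connected and simply-connected by hypothesis, it is again $\omega$-bounded (adjoining a compact disc preserves this), and it is non-metric, hence non-compact (a compact surface is metric, and a metric $P_{\rm filled}$ would force the subspace $P$ to be metric). In particular $P_{\rm filled}$ can never be swallowed by a single compact disc.

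The substantial input comes next. Given an arbitrary Jordan curve $J\subset{\rm int}(P)\subset P_{\rm filled}$, simple-connectivity makes $J$ null-homotopic, so by Schoenflies \cite[Prop.\,2]{GaGa2010} it bounds a $2$-disc $D_J\subset P_{\rm filled}$ with $\partial D_J=J$. This is the only non-formal step; the remainder is bookkeeping on the two complementary regions. I would then observe that $J$ separates $P_{\rm filled}$ into the disjoint non-empty open sets $U_1:={\rm int}(D_J)$ and $U_2:=P_{\rm filled}\setminus\overline{D_J}$, the non-vanishing of $U_2$ being exactly the point where non-compactness of $P_{\rm filled}$ enters (it cannot equal the compact $\overline{D_J}$). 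Since the cap $D$ is connected and disjoint from $J$, it sits entirely inside one of these regions, say $D\subset U_k$.

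Finally I would delete the cap and read off the separation of ${\rm int}(P)$. The region $U_j$ with $j\neq k$ is untouched and non-empty, while $U_k\setminus D$ is non-empty because $U_k$ is a connected surface strictly larger than $D$ (an open disc is never a closed disc, and a non-compact $U_2$ is never contained in the compact $D$). Hence
$$
{\rm int}(P)\setminus J=(U_1\cup U_2)\setminus D=(U_1\setminus D)\sqcup(U_2\setminus D)
$$
exhibits ${\rm int}(P)\setminus J$ as a disjoint union of two non-empty open sets, so $J$ disconnects ${\rm int}(P)$; as $J$ was arbitrary, ${\rm int}(P)$ is dichotomic. The delicate point to guard is the invocation of Schoenflies in the \emph{non-metric} surface $P_{\rm filled}$: this is legitimate precisely because \cite[Prop.\,2]{GaGa2010} is formulated for simply-connected surfaces, and because the compact curve $J$ lies in a metric (Lindel\"of) submanifold, so a bounding disc exists locally and closes up to one in $P_{\rm filled}$ by simple-connectivity.
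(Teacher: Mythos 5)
Your proof is correct and follows the same basic reduction as the paper---pass to the filled pipe $P_{\rm filled}$ and exploit its simple-connectivity---but where the paper simply cites two ready-made facts (the dichotomy of arbitrary simply-connected surfaces, \cite[Prop.\,6]{GaGa2010}, and the heredity of dichotomy under passage to subsurfaces, \cite[Lemma~5.3]{GabGa_2011}), you unpack both by hand from the non-metric Schoenflies theorem \cite[Prop.\,2]{GaGa2010}. What your version buys is self-containedness: one sees explicitly why deleting the cap $D$ does not destroy the separation, namely because $D$ is connected and disjoint from $J$, hence lies in a single complementary region, and neither region can be swallowed by $D$. The only places to tighten are your two non-emptiness justifications. ``An open disc is never a closed disc'' rules out $U_1=D$ but not $U_1\subsetneq D$; the clean argument is that $U_1\subseteq D$ with $D$ closed would force $J\subseteq\overline{U_1}=D_J\subseteq D\cup U_1\subseteq D$, contradicting $J\subseteq{\rm int}(P)=P_{\rm filled}\setminus D$. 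Similarly, a non-compact set can perfectly well sit inside a compact one (an open interval in a closed one), so for $U_2$ argue instead that $U_2\subseteq D$ would give $P_{\rm filled}=D_J\cup U_2\subseteq D_J\cup D$, compact, contradicting non-compactness of $P_{\rm filled}$. With those two sentences repaired the argument is complete, and your closing worry about invoking Schoenflies in a non-metric surface is unfounded: \cite[Prop.\,2]{GaGa2010} is stated for simply-connected surfaces without metrisability hypotheses, which is exactly how the paper itself uses it elsewhere.
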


\begin{proof} The dichotomy of the
filled
%long
pipe $P_{\rm filled}$ follows at once from the dichotomy of
any simply-connected surface
%(compare
(cf. \cite[Prop.\,6]{GaGa2010}). Since the interior of the
pipe ${\rm int}P\subset P_{\rm filled}$, its dichotomy follows
by heredity
%(cf.
\cite[Lemma~5.3]{GabGa_2011}.
\end{proof}

\begin{lemma}\label{charact_pipe} The (singular) Euler
characteristic of any long pipe is $0$.
\end{lemma}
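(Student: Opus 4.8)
The plan is to compare the Euler characteristic of the pipe $P$ with that of its filled companion $P_{\rm filled}=P\cup D$, which by Definition~\ref{pipe:def} is simply-connected. Since $P$ and the $2$-disc $D$ are glued along the common boundary circle $\partial P\approx{\Bbb S}^1$, I would first establish the additivity relation
$$\chi(P_{\rm filled})=\chi(P)+\chi(D)-\chi({\Bbb S}^1)$$
by a Mayer--Vietoris argument: thicken $P$ and $D$ to overlapping open sets $U\simeq P$ and $V\simeq D$ whose intersection is an open collar homotopy equivalent to ${\Bbb S}^1$, then take the alternating sum of dimensions along the (exact) Mayer--Vietoris sequence. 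That this alternating sum vanishes rests on all groups involved being finite-dimensional, which is exactly what the homological finiteness lemma of Section~2 guarantees for the $\omega$-bounded surfaces $P$ and $P_{\rm filled}$ (and which is automatic for the compact pieces $D$ and ${\Bbb S}^1$).

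Given $\chi(D)=1$ and $\chi({\Bbb S}^1)=0$, this reduces the statement to showing $\chi(P_{\rm filled})=1$. Here I would read off the Betti numbers directly: $P_{\rm filled}$ is connected, so $b_0=1$; it is simply-connected, so $H_1=0$ and $b_1=0$; and it is an \emph{open} (boundaryless, non-compact) $2$-manifold---being the non-metric $\omega$-bounded $P$ with a disc attached---so part (ii) of the finiteness lemma forces $H_2(P_{\rm filled})=0$ and $b_2=0$, with all higher homology vanishing for the same reason. Hence $\chi(P_{\rm filled})=1-0+0=1$, and the displayed relation yields $\chi(P)=\chi(P_{\rm filled})-1=0$.

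The main obstacle is not the arithmetic but the careful invocation of finiteness and, above all, of the vanishing of $H_2$. One must verify that $P_{\rm filled}$ is again $\omega$-bounded (it is the union of an $\omega$-bounded set with a compactum) and genuinely open---boundaryless after capping, and non-compact because it contains the non-metric $P$---so that the openness clause of the lemma applies. This vanishing of the top homology is precisely what pins $\chi(P_{\rm filled})$ to $1$ rather than to the value $2$ appropriate to a closed sphere, and it is the crux of the computation; the Mayer--Vietoris collar bookkeeping is otherwise routine, provided one has secured beforehand that every group in sight is finite-dimensional.
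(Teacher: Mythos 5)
Your proposal is correct and follows essentially the same route as the paper: Mayer--Vietoris additivity of $\chi$ over the decomposition $P_{\rm filled}=P\cup D$, combined with $\chi(P_{\rm filled})=1-0+0=1$ obtained from simple-connectivity and the vanishing of the top-dimensional homology of an open manifold (the paper cites Samelson's Lemma D for the latter, which is also what part (ii) of the finiteness lemma rests on). Your additional care in checking that $P_{\rm filled}$ is open and $\omega$-bounded and in recording the correction term $\chi({\Bbb S}^1)=0$ only makes explicit what the paper leaves implicit.
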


\begin{proof} By definition
%if one
%sews a 2-disc $D$ to the boundary of the pipe $P$, the surface
the filled pipe $P\cup D=:P_{\rm filled}$ is {\it
simply-connected}, i.e. its fundamental group, hence a
fortiori its abelianisation $H_1$, is trivial. Thus,
$\chi(P_{\rm filled})=1-0+0=1$, for the second Betti number
$b_2(P_{\rm filled})=0$ via the classical vanishing of the
top-dimensional homology of an open manifold (cf.
Samelson~\cite[Lemma~D]{Samelson_1965-homology}). Via
Mayer-Vietoris one has $\chi(P\cup D)=\chi(P)+\chi(D)$ (like
in the combinatorial
%``finitary''
%finistic
setting). Thus, $\chi(P)=1-1=
%\chi(P_{\rm filled})-1=
0$.
\end{proof}

\begin{lemma}\label{charact_bagpipe_equal_char_bag} If a surface $M$ has a bagpipe decomposition
$M=B\cup \bigsqcup_{i=1}^n P_i$. Then $\chi (M)=\chi(B)$.
\end{lemma}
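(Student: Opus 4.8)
The plan is to use the additivity of the Euler characteristic under gluing along circles, together with the computation that each pipe contributes nothing. First I would set up the Mayer--Vietoris bookkeeping: the bagpipe decomposition $M=B\cup\bigsqcup_{i=1}^n P_i$ is an expression of $M$ as the union of the bag $B$ and the pipes $P_i$, where $B$ and each $P_i$ are glued along their common boundary circles $\partial P_i\approx{\Bbb S}^1$. Since the intersection of $B$ with each $P_i$ is exactly one such circle (and the pipes are pairwise disjoint), I would apply Mayer--Vietoris to the pair $(B,\bigsqcup_i P_i)$ with intersection $\bigsqcup_i{\Bbb S}^1$, yielding the additivity relation
\begin{equation*}
\chi(M)=\chi(B)+\sum_{i=1}^n\chi(P_i)-\sum_{i=1}^n\chi({\Bbb S}^1).
\end{equation*}

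Next I would plug in the two values I already know. By Lemma~\ref{charact_pipe} each pipe has $\chi(P_i)=0$, and the circle has $\chi({\Bbb S}^1)=0$ (a standard computation, $1-1=0$). Both subtracted and added terms therefore vanish, collapsing the displayed formula directly to $\chi(M)=\chi(B)$, which is exactly the claim. The well-definedness of all these Euler characteristics is guaranteed by the finiteness Lemma of Section~\ref{sec1}, so there is no issue in the non-metric setting with the homology being infinite-dimensional.

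The one point requiring genuine care is the legitimacy of Mayer--Vietoris (equivalently, the additivity of $\chi$ under gluing) in the non-metric category, since $M$ and the pipes $P_i$ are not second countable. I would address this by choosing the gluing collar neighbourhoods appropriately: thicken each boundary circle $\partial P_i$ to an open collar so that $B$ and the $P_i$ are replaced by slightly enlarged open sets whose pairwise intersections deformation-retract onto the circles $\partial P_i\approx{\Bbb S}^1$. The Mayer--Vietoris sequence for singular homology holds for any open cover of any topological space, so no metrizability is needed; the only subtlety is that all groups in sight are finite-dimensional, which is precisely what the finiteness Lemma provides, making the alternating-sum manipulation of dimensions valid. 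I expect this verification of the hypotheses of Mayer--Vietoris to be the main (though minor) obstacle, the rest being the immediate substitution $\chi(P_i)=\chi({\Bbb S}^1)=0$.
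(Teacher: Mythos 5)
Your proof is correct and follows essentially the same route as the paper's: a Mayer--Vietoris additivity argument giving $\chi(M)=\chi(B)+\sum_{i=1}^n\chi(P_i)$ (the boundary circles contributing nothing since $\chi({\Bbb S}^1)=0$), combined with $\chi(P_i)=0$ from Lemma~\ref{charact_pipe}. The paper is merely terser, suppressing the explicit $-\sum_i\chi({\Bbb S}^1)$ term and the collar-thickening justification that you spell out.
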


\begin{proof} Set $P=\bigsqcup_{i=1}^n P_i$, thus $M=B\cup P$.
The Mayer-Vietoris sequence shows that
$\chi(M)=\chi(B)+\chi(P)$.
%We have merely to check that
%$\chi(P)=0$.
By the obvious additivity of homology,
%of the characteristic $\chi$ (again
%Mayer-Vietoris),
$\chi(P)=\sum_{i=1}^n\chi(P_i)$, where each individual pipe
%,
%$P_i$,
has
%%vanishing $\chi$
$\chi(P_i)=0$ by
%Lemma~
(\ref{charact_pipe}).
\end{proof}

%\begin{acknowledgement}

%\end{acknowledgement}

{\small {\bf Acknowledgements.} The author
%acknowledges
wishes to thank
%interesting with
David Gauld, Mathieu Baillif, Misha Gabard and Elias Boul\'e
for specialised
%some
%recent
discussions, as well as general
%discussions
conversations with  Claude Weber, Michel Kervaire, Jean-Claude
Hausmann, Andr\'e Haefliger and Felice Ronga all over the
years.

}

{\small
%%%%%%%%%%%%%%%%%%%SAXO%%%%%%%NEW BIBLIO

}

{
\hspace{+5mm} % To get a little bit of space between the figures
{\footnotesize
\begin{minipage}[b]{0.6\linewidth} Alexandre
Gabard

Universit\'e de Gen\`eve

Section de Math\'ematiques

2-4 rue du Li\`evre, CP 64

CH-1211 Gen\`eve 4

Switzerland

alexandregabard@hotmail.com
\end{minipage}
\hspace{-25mm} }

\end{document}